\title{Bounds on the State Vector Growth Rate\\ in Stochastic Dynamical Systems\thanks{Proc. 5th St.~Petersburg Workshop on Simulation, St. Petersburg, Russia, June 26-July 2, 2005 / Ed. by S.~M.~Ermakov and V.~B.~Melas and A.~N.~Pepelyshev, St.~Petersburg, 2005, pp.~391-396.}} 
\author{Nikolai~K.~Krivulin\thanks{Faculty of Mathematics and Mechanics, St.~Petersburg State University, 28 Universitetsky Ave., St.~Petersburg, 198504, Russia, 
nkk@math.spbu.ru.} \thanks{The work was partially supported by the Russian Foundation for Basic Research, Grant \#04-01-00840.}
}
\date{}
\newtheorem{theorem}{Theorem}
\newtheorem{lemma}[theorem]{Lemma}
\newtheorem{corollary}[theorem]{Corollary}
\begin{document}

\maketitle

\begin{abstract}
A stochastic dynamical system represented through a linear vector equation in idempotent algebra is considered. We propose simple bounds on the mean growth rate of the system state vector, and give an analysis of absolute error of a bound. As an illustration, numerical results of evaluation of the bounds for a test system are also presented.
\\

\textit{Key-Words:} stochastic dynamical system, growth rate, idempotent algebra.
\end{abstract}

\section{Introduction}

The evolution of actual systems encountered in economics, management, and engineering can frequently be represented as stochastic linear dynamic equations in idempotent algebra \cite{Baccelli1993Synchronization,Maslov1994Idempotent}. In many cases, analysis of the system can involve evaluation of the asymptotic growth rate of the system state vector. However, the exact evaluation of the growth rate normally appears to be a hard problem. The exact solution is known only for systems with $2$-dimensional state space \cite{Baccelli1993Synchronization}, systems with a triangular state transition matrix \cite{Krivulin2005Thegrowth}, and some others.

In this paper, we propose simple bounds on the asymptotic (mean) growth rate, which can be considered as a generalization for bounds in \cite{Krivulin2003Estimation}. We start with a brief overview of related algebraic results including some matrix inequalities. Based on these inequalities, both upper and low bounds are derived, and an analysis of the absolute error of an upper bound is given. As an illustration, numerical results of evaluation of the bounds for a test system are also presented.

\section{Idempotent Algebra and Related Results}
We consider an idempotent algebra (idempotent semifield with a null element) with addition $ x\oplus y=\max(x,y) $ and multiplication $ x\otimes y=x+y $ defined for all $ x $ and $ y $ from the extended set of real numbers $ \mathbb{R}_{\varepsilon}=\mathbb{R}\cup\{\varepsilon\} $, where $ \varepsilon=-\infty $.

Clearly, the numbers $ \varepsilon $ and $ 0 $ present null and identity elements of the algebra. For any $ x\in\mathbb{R} $, one can define its inverse $ x^{-1} $ that is equal to $ -x $ in conventional algebra, and the power $ x^{a} $, which corresponds to arithmetic product $ ax $ for all $ a\in\mathbb{R} $. In the case that $ x=\varepsilon $, it is convenient to set $ x^{-1}=\varepsilon $.

The matrix operations $ \oplus $ and $ \otimes $ are introduced in the usual way through their scalar counterparts. The matrix $ \cal{E} $ involving only $ \varepsilon $ presents the null matrix, and $ E=\mathrm{diag}(0,\ldots,0) $ with all off-diagonal entries being equal to $ \varepsilon $ is the identity.

Any nonnegative integer power of a square matrix $ A $ is determined by the relations: $ A^{0}=E $ and $ A^{l}\otimes A^{m}=A^{l+m} $ for all integers $ l,m\geq1 $. In what follows, the exponential notations will be used only in the sense of idempotent algebra. However, for simplicity sake, we will sometimes represent the power of a number in the form of its equivalent arithmetic product.

For any matrix $ A=(a_{ij}) $, we introduce the matrix $ A^{-} $ with entries $ a_{ij}^{-}=a_{ji}^{-1} $. Similarly, for any vector $ \bm{x}=(x_{1},\ldots,x_{n})^{T} $, we have $ \bm{x}^{-}=(x_{1}^{-1},\ldots,x_{n}^{-1}) $.

The operation $ \otimes $ is monotonic; that is, from the inequalities $ A\leq C $ and $ B\leq D $, it follows that $ A\otimes B\leq C\otimes D $.

For any matrices $ A\in\mathbb{R}_{\varepsilon}^{n\times n} $ and $ B\in\mathbb{R}^{n\times n} $, and $ \bm{0}=(0,\ldots,0)^{T} $, it holds
\begin{equation}\label{I-ABA0B0}
A\otimes B
\geq
A\otimes\bm{0}\otimes(B^{-}\otimes\bm{0})^{-}.
\end{equation}

Consider a matrix $ A=(a_{ij})\in\mathbb{R}_{\varepsilon}^{n\times n} $, and introduce the symbols:
$$
\|A\|=\bigoplus_{1\leq i,j\leq n}a_{ij},
\qquad
\mathrm{tr}(A)=\bigoplus_{i=1}^{n}a_{ii}.
$$

For any $ A,B\in\mathbb{R}_{\varepsilon}^{n\times n} $, if $ A\leq B $ then $ \|A\|\leq\|B\| $. Furthermore, it holds that
$$
\|A\otimes B\|
\leq
\|A\|\otimes\|B\|,
\qquad
\|c\otimes A\|=c\otimes\|A\|
\quad
\mbox{for all $ c\in\mathbb{R}_{\varepsilon}$}.
$$

If $ A\in\mathbb{R}_{\varepsilon}^{n\times n} $ and $ B\in\mathbb{R}^{n\times n} $, we also have
$$
\|A\otimes B\|
\geq
\|A\|\otimes\|B^{-}\|^{-1}.
$$

The key result of the spectral theory in idempotent algebra is as follows \cite{Romanovskii1967Optimization,Vorobjev1967Extremal}: for any matrix $ A\in\mathbb{R}_{\varepsilon}^{n\times n} $, it holds that
\begin{equation}\label{E-limAkk}
\lim_{k\to\infty}\|A^{k}\|^{1/k}
=
\rho(A)
=
\bigoplus_{m=1}^{n}\mathrm{tr}^{1/m}(A^{m}),
\end{equation}
where $ \rho(A) $ is the spectral radius of $ A $.

Let us now consider random matrices taking their values in $ \mathbb{R}_{\varepsilon}^{n\times n} $. For any random matrix $ A $, we use the symbol $ \mathbb{E}[A] $ to denote the matrix obtained from $ A $ by replacing all its entries with their expected values, provided that $ \mathbb{E}[\varepsilon]=\varepsilon $.

For any random matrices $ A $ and $ B $, it holds that
$$
\mathbb{E}\|A\|
\geq
\|\mathbb{E}[A]\|,
\quad
\mathbb{E}[A\otimes B]
\geq
\mathbb{E}[A]\otimes\mathbb{E}[B].
$$

Furthermore, let the matrices $ A $ and $ B $ be independent. Then we have
$$
\mathbb{E}\|A\otimes B\|
\geq
\mathbb{E}\|A\otimes\mathbb{E}[B]\|.
$$
If, in addition, the entries of $ B $ are finite with probability 1 (w.p.~1), then
\begin{equation}\label{I-EABEAEB0}
\mathbb{E}\|A\otimes B\|
\geq
\mathbb{E}\|A\|\otimes\|\mathbb{E}(B\otimes\bm{0})^{-}\|^{-1}
\geq
\mathbb{E}\|A\|\otimes\|\mathbb{E}[B^{-}]\|^{-1}.
\end{equation}

\section{Stochastic Dynamical Systems}

Consider a dynamical system governed by the equation
\begin{equation}\label{E-xkAkxk1}
\bm{x}(k)
=
A^{T}(k)\otimes\bm{x}(k-1),
\end{equation}
where $ \bm{x}(k) $ is a state vector, $ A(k) $ is a random state transition matrix.

We assume that the matrices $ A(k) $, $ k=1,2,\ldots, $ are independent and identically distributed, and that the mean value $ \mathbb{E}\|A(1)\| $ is finite.

Let us define the mean (asymptotic) growth rate of the system state vector as
$$
\lambda=\lim_{k\to\infty}\|\bm{x}(k)\|^{1/k}.
$$

Assuming the entries of the initial vector $ \bm{x}(0) $ to be finite w.p.~1, one can represent $ \lambda $ in the form
$$
\lambda=\lim_{k\to\infty}\|A_{k}\|^{1/k},
$$
where
$$
A_{k}=A(1)\otimes\cdots\otimes A(k).
$$

It can be shown (e.g., with the ergodic theorem in \cite{Kingman1973Subadditive}) that for the system under consideration, the above limit exists w.p.~1. Moreover, there exists the limit
$$
\lim_{k\to\infty}\mathbb{E}\|A_{k}\|^{1/k}=\lambda.
$$

The last result will be used in subsequent sections to derive bounds on $ \lambda $.

As an example, we consider a test system (\ref{E-xkAkxk1}) with random $(2\times 2)$-matrix $ A(k) $ with independent entries, each having the exponential probability distribution of mean $1$. It is known (see, e.g. \cite{Baccelli1993Synchronization}) that for the system, $ \lambda=407/228\approx1.7851 $.

To illustrate the bounds presented bellow, we need to know the means of the entries $ (A_{m})_{ij} $, row maxima $ (A_{m}\otimes\bm{0})_{i} $, and the overall maximum $ \|A_{m}\| $ of the matrix $ A_{m} $. Evaluation of the means for $ m=1,2,3 $, gives us the following results
\begin{gather*}
\mathbb{E}[(A_{1})_{ij}]=1,
\qquad
\mathbb{E}[(A_{1}\otimes\bm{0})_{i}]=1.5,
\qquad
\mathbb{E}\|A_{1}\|=\frac{25}{12}\approx2.0833, \\
\mathbb{E}[(A_{2})_{ij}]=2.75,
\quad
\mathbb{E}[(A_{2}\otimes\bm{0})_{i}]=\frac{119}{36}\approx3.3056,
\quad
\mathbb{E}\|A_{2}\|=\frac{833}{216}\approx3.8565, \\
\mathbb{E}[(A_{3})_{ij}]=\frac{245}{54}\approx4.5370,
\qquad
\mathbb{E}[(A_{3}\otimes\bm{0})_{i}]=\frac{1649}{324}\approx5.0895, \\
\mathbb{E}\|A_{3}\|=\frac{21937}{3888}\approx5.6422.
\end{gather*}

Note that it is easy to get the means when $ m\leq2 $. However, the evaluation rapidly grows in computational complexity as $ m $ becomes greater than $ 2 $.

\section{Straightforward Low and Upper Bounds}

We start with simple low and upper bounds which are valid for systems with any matrix $ A_{1} $ having a finite mean value $ \mathbb{E}\|A_{1}\| $. 
\begin{lemma}
For any integer $ m\geq1 $, it holds
\begin{equation}\label{I-LU1}
\rho^{1/m}(\mathbb{E}[A_{m}])
\leq
\lambda
\leq
\mathbb{E}\|A_{m}\|^{1/m}.
\end{equation}
\end{lemma}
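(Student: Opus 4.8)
The plan is to exploit the i.i.d.\ structure of the factors by regrouping the product $A_{km}$ into $k$ independent blocks, each distributed as $A_{m}$, and then to pass to the limit along the subsequence $\{km\}_{k\geq1}$. Concretely, writing
$$
A_{km}
=
B_{1}\otimes\cdots\otimes B_{k},
\qquad
B_{j}=A((j-1)m+1)\otimes\cdots\otimes A(jm),
$$
each block $B_{j}$ has the same distribution as $A_{m}$. Since the sequence $\mathbb{E}\|A_{k}\|^{1/k}$ converges to $\lambda$, so does every subsequence, giving $\lambda=\lim_{k\to\infty}\mathbb{E}\|A_{km}\|^{1/(km)}$; both bounds are then obtained by estimating $\mathbb{E}\|A_{km}\|$ from above and below and taking this limit.

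For the upper bound I would apply submultiplicativity of the norm, $\|A\otimes B\|\leq\|A\|\otimes\|B\|$, repeatedly to get $\|A_{km}\|\leq\|B_{1}\|\otimes\cdots\otimes\|B_{k}\|$. Taking expectations and recalling that $\otimes$ is ordinary addition of scalars, linearity of $\mathbb{E}$ together with the identical distribution of the blocks yields $\mathbb{E}\|A_{km}\|\leq k\,\mathbb{E}\|A_{m}\|$. Rescaling by $1/(km)$ (which corresponds to the idempotent power) and letting $k\to\infty$ then gives $\lambda\leq\mathbb{E}\|A_{m}\|^{1/m}$.

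For the lower bound I would first use the inequality $\mathbb{E}[A\otimes B]\geq\mathbb{E}[A]\otimes\mathbb{E}[B]$ iteratively, together with monotonicity of $\otimes$, to obtain $\mathbb{E}[A_{km}]\geq(\mathbb{E}[A_{m}])^{k}$. Combining this with the inequalities $\mathbb{E}\|A\|\geq\|\mathbb{E}[A]\|$ and the fact that $A\leq B$ implies $\|A\|\leq\|B\|$ produces $\mathbb{E}\|A_{km}\|\geq\|(\mathbb{E}[A_{m}])^{k}\|$. Writing $M=\mathbb{E}[A_{m}]$, rescaling by $1/(km)$, and invoking the spectral limit formula $\lim_{k\to\infty}\|M^{k}\|^{1/k}=\rho(M)$ from~(\ref{E-limAkk}), I arrive at $\lambda\geq\rho^{1/m}(\mathbb{E}[A_{m}])$.

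The algebraic manipulations are routine; the step requiring the most care is the lower bound, where one must correctly chain the two distinct expectation inequalities, namely the norm/expectation swap $\mathbb{E}\|A\|\geq\|\mathbb{E}[A]\|$ and the product/expectation swap $\mathbb{E}[A\otimes B]\geq\mathbb{E}[A]\otimes\mathbb{E}[B]$, in the right order before the spectral radius formula can be applied.
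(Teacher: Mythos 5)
Your proposal is correct and follows essentially the same route as the paper: the paper proves the case $m=1$ via the chain $\|(\mathbb{E}[A_{1}])^{k}\|\leq\mathbb{E}\|A_{k}\|\leq\mathbb{E}\|A_{1}\|^{k}$ together with the spectral limit formula, and handles general $m$ by the same block-regrouping you spell out explicitly. Your version merely writes out the general-$m$ argument that the paper dismisses as ``similar,'' using the same two expectation inequalities and submultiplicativity in the same places.
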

\begin{proof} In order to verify (\ref{I-LU1}), let us first put $ m=1 $, and note that 
$$
\|(\mathbb{E}[A_{1}])^{k}\|
=
\left\|\bigotimes_{i=1}^{k}\mathbb{E}[A(i)]\right\|
\leq
\mathbb{E}\|A_{k}\|
\leq
\mathbb{E}\left[\bigotimes_{i=1}^{k}\|A(i)\|\right]
=
\mathbb{E}\|A_{1}\|^{k}.
$$

It remains to divide the above double inequality by $ k $, and proceed to get limits. With (\ref{E-limAkk}) applied to the left side, we immediately arrive at
$$
\rho(\mathbb{E}[A_{1}])
\leq
\lambda
\leq
\mathbb{E}\|A_{1}\|.
$$

The case of arbitrary $ m>1 $ can be considered in a similar way.
\end{proof}

Table~\ref{T-LU1} presents results of evaluating the bounds for the test problem.
\begin{table}[!ht]
\begin{center}
\begin{tabular}{||c|c|c|c||}
\hline\hline
Bounds & \multicolumn{3}{|c||}{$m$} \\
\cline{2-4}
(\ref{I-LU1}) & $1$ & $2$ & $3$ \\
\cline{2-4} 
\hline
Upper & 2.0833 & 1.9282 & 1.8807 \\
\hline
Low & 1.0000 & 1.3750 & 1.5123 \\
\hline\hline
\end{tabular}
\caption{Bounds evaluated according to (\ref{I-LU1}).}\label{T-LU1}
\end{center}
\end{table}

\section{Low Bounds for Finite Matrices}

Suppose now that all the entries of the matrix $ A_{1} $ are greater than $ \varepsilon $ w.p.~1. 

\begin{lemma}
For any integer $ m\geq1 $, it holds
\begin{equation}\label{I-L2}
\lambda
\geq
\|\mathbb{E}(A_{m}\otimes\bm{0})^{-}\|^{-1/m}.
\end{equation}
\end{lemma}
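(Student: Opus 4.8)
The plan is to derive the bound from inequality (\ref{I-EABEAEB0}) by grouping a long product of matrices into blocks of length $ m $, and then to pass to the limit in the representation $ \lambda=\lim_{k\to\infty}\mathbb{E}\|A_{k}\|^{1/k} $. For each $ k\geq1 $, I would split the product $ A_{km}=A(1)\otimes\cdots\otimes A(km) $ into consecutive blocks
$$
B_{j}=A((j-1)m+1)\otimes\cdots\otimes A(jm),
\qquad
j=1,\ldots,k,
$$
so that $ A_{km}=B_{1}\otimes\cdots\otimes B_{k} $. Since the $ A(i) $ are independent and identically distributed, the matrices $ B_{1},\ldots,B_{k} $ are independent and each has the same distribution as $ A_{m} $; moreover, by the standing assumption of this section their entries are finite w.p.~1. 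For brevity write $ c=\|\mathbb{E}(A_{m}\otimes\bm{0})^{-}\|^{-1} $ for the scalar on the right of (\ref{I-L2}).

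The main computation is to peel off one block at a time from the right. Applying (\ref{I-EABEAEB0}) with $ A=B_{1}\otimes\cdots\otimes B_{k-1} $ and $ B=B_{k} $, which are independent and have $ B_{k} $ finite w.p.~1, gives
$$
\mathbb{E}\|A_{km}\|
\geq
\mathbb{E}\|B_{1}\otimes\cdots\otimes B_{k-1}\|\otimes\|\mathbb{E}(B_{k}\otimes\bm{0})^{-}\|^{-1}
=
\mathbb{E}\|B_{1}\otimes\cdots\otimes B_{k-1}\|\otimes c,
$$
the last equality holding because $ B_{k} $ is distributed as $ A_{m} $. Iterating this step $ k-1 $ times I expect to reach
$$
\mathbb{E}\|A_{km}\|
\geq
\mathbb{E}\|B_{1}\|\otimes c^{k-1}
=
\mathbb{E}\|A_{m}\|\otimes c^{k-1},
$$
which in ordinary arithmetic reads $ \mathbb{E}\|A_{km}\|\geq\mathbb{E}\|A_{m}\|+(k-1)c $.

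It then remains to divide by $ km $ and let $ k\to\infty $. Since $ \mathbb{E}\|A_{m}\|\leq m\,\mathbb{E}\|A_{1}\| $ is finite, the contribution $ \mathbb{E}\|A_{m}\|/(km) $ vanishes and the right-hand side tends to $ c/m $; in idempotent notation this is exactly $ c^{1/m}=\|\mathbb{E}(A_{m}\otimes\bm{0})^{-}\|^{-1/m} $. The left-hand side $ \mathbb{E}\|A_{km}\|^{1/(km)} $ converges to $ \lambda $, being a subsequence of the convergent sequence $ \mathbb{E}\|A_{n}\|^{1/n} $, and this yields (\ref{I-L2}). The points that require care are essentially bookkeeping: verifying that independence is genuinely preserved at each step of the iteration (each partial product $ B_{1}\otimes\cdots\otimes B_{j} $ is a function of $ A(1),\ldots,A(jm) $ and hence independent of $ B_{j+1} $), and justifying the limit passage along the subsequence $ n=km $ via the existence of $ \lim_{n\to\infty}\mathbb{E}\|A_{n}\|^{1/n}=\lambda $. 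I do not expect a genuinely hard analytic obstacle here, since inequality (\ref{I-EABEAEB0}) already carries the essential content and the argument closely parallels the proof of (\ref{I-LU1}).
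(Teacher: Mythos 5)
Your proposal is correct and follows essentially the same route as the paper: the paper's proof iterates inequality (\ref{I-EABEAEB0}) to obtain $\mathbb{E}\|A_{k}\|\geq\mathbb{E}\|A_{1}\|\otimes\|\mathbb{E}(A_{1}\otimes\bm{0})^{-}\|^{-(k-1)}$ for $m=1$ and then passes to the limit, stating that the general case extends easily. Your blocking of $A_{km}$ into independent length-$m$ factors distributed as $A_{m}$ is precisely that extension, spelled out in full.
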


\begin{proof} In the case that $ m=1 $, we apply (\ref{I-EABEAEB0}) to write
$$
\mathbb{E}\|A_{k}\|
\geq
\mathbb{E}\|A_{1}\|\otimes\|\mathbb{E}(A_{1}\otimes\bm{0})^{-}\|^{-(k-1)},
$$
and then get the inequality
$$
\lambda
\geq
\|\mathbb{E}(A_{1}\otimes\bm{0})^{-}\|^{-1},
$$
which can easily be extended to arbitrary integer $ m\geq1 $ in the form of (\ref{I-L2}).
\end{proof}

Evaluation of the bounds for $ m=1,2,3 $, gives us: $ 1.5000 $, $ 1.6528 $, and $ 1.6965 $.

\begin{lemma}
For any integers $ l,m\geq1 $, it holds
\begin{equation}\label{I-L3}
\lambda
\geq
\mathbb{E}\|(\mathbb{E}[A_{l}^{-}]\otimes\bm{0})^{-}\otimes A_{m}\|^{1/(l+m)}.
\end{equation}
\end{lemma}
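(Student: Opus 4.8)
The plan is to regard $A_{k(l+m)}$ as a chain of $k$ independent blocks, each of length $l+m$, and to show that every block contributes, in the limit, a factor $F:=\mathbb{E}\|(\mathbb{E}[A_l^-]\otimes\bm{0})^-\otimes A_m\|$ to $\mathbb{E}\|A_{k(l+m)}\|$; dividing by $k(l+m)$ and letting $k\to\infty$ then yields (\ref{I-L3}). Concretely, I would write $A_{k(l+m)}=X\otimes C\otimes D$, where $X=A_{(k-1)(l+m)}$ is the product of the first $k-1$ blocks, $C$ is the product of the next $l$ matrices (so $C$ is distributed as $A_l$), and $D$ is the product of the last $m$ matrices (distributed as $A_m$); since these groups involve disjoint sets of the independent matrices $A(i)$, the factors $X$, $C$, $D$ are mutually independent, and all entries are finite w.p.~1. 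The heart of the argument is the one-block inequality
$$
\mathbb{E}\|X\otimes C\otimes D\|
\geq
\mathbb{E}\|X\|\otimes F ,
$$
which, together with $\|A^{0}\|=\|E\|=0$, gives by induction $\mathbb{E}\|A_{k(l+m)}\|\geq\mathbb{E}\|A_{(k-1)(l+m)}\|\otimes F\geq\cdots\geq F^{k}$.

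To prove the one-block inequality I would proceed in three steps, and the order of the first two is the delicate point. First, I replace the random middle factor $C$ by the deterministic matrix $M:=\mathbb{E}[A_l]$: for fixed $X$ and $D$ the scalar $\|X\otimes C\otimes D\|$ is a maximum of expressions affine in the entries of $C$, hence convex in $C$, so the argument that proves the stated inequality $\mathbb{E}\|A\otimes B\|\geq\mathbb{E}\|A\otimes\mathbb{E}[B]\|$, applied to the middle factor and using independence of $C$ from $(X,D)$, gives $\mathbb{E}\|X\otimes C\otimes D\|\geq\mathbb{E}\|X\otimes M\otimes D\|$. Second, now that $M$ is deterministic and finite, I apply (\ref{I-ABA0B0}) to the pair $(X,M)$ to get $X\otimes M\geq X\otimes\bm{0}\otimes(M^-\otimes\bm{0})^-$; multiplying by $D$ on the right, taking $\|\cdot\|$, and noting that the norm of the outer product of the column vector $X\otimes\bm{0}$ and the row vector $(M^-\otimes\bm{0})^-\otimes D$ is the product of their norms, I obtain
$$
\|X\otimes M\otimes D\|
\geq
\|X\otimes\bm{0}\|\otimes\|(M^-\otimes\bm{0})^-\otimes D\|
=
\|X\|\otimes\|(M^-\otimes\bm{0})^-\otimes D\| .
$$
Third, I take expectations: since $X$ is independent of $D$ the right-hand side splits, and because $\mathbb{E}$ commutes with the operation $(\cdot)^-$ (entrywise negation composed with transposition) one has $M^-=(\mathbb{E}[A_l])^-=\mathbb{E}[A_l^-]$, so the second factor equals $F$. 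This establishes the one-block inequality.

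The main obstacle is precisely this ordering. If one applies (\ref{I-ABA0B0}) directly to the \emph{random} $C$, the factor produced is of the form $\min_p C_{pq}$, and since $\mathbb{E}[\min_p C_{pq}]\leq\min_p\mathbb{E}[C_{pq}]$ the ensuing expectation runs in the wrong direction and cannot reach $F$; averaging $C$ to $M$ \emph{before} performing the min-type factorization of (\ref{I-ABA0B0}) is what keeps every estimate on the correct side, and recognizing the identity $\mathbb{E}[A_l^-]=(\mathbb{E}[A_l])^-$ is what makes $F$ appear. The remainder is routine: from $\mathbb{E}\|A_{k(l+m)}\|\geq F^{k}$ I raise to the power $1/(k(l+m))$ to get $\mathbb{E}\|A_{k(l+m)}\|^{1/(k(l+m))}\geq F^{1/(l+m)}$, and let $k\to\infty$; since $\mathbb{E}\|A_{n}\|^{1/n}\to\lambda$, the subsequence $n=k(l+m)$ gives $\lambda\geq F^{1/(l+m)}$, which is (\ref{I-L3}).
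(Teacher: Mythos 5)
Your proof is correct and follows essentially the same route as the paper's: average the length-$l$ blocks first, apply (\ref{I-ABA0B0}) to the resulting deterministic factor, and use the rank-one structure together with independence to telescope $\mathbb{E}\|A_{k(l+m)}\|$ into $k$ copies of the factor $\mathbb{E}\|(\mathbb{E}[A_{l}^{-}]\otimes\bm{0})^{-}\otimes A_{m}\|$ before passing to the limit. Your explicit observation that the averaging must precede the min-type factorization (since $\mathbb{E}[\min]\leq\min\mathbb{E}$ runs the wrong way) is precisely the point that the paper's first displayed inequality performs implicitly, and your induction for general $l,m$ simply carries out what the paper dismisses with ``in a similar manner.''
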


\begin{proof} Let us prove the inequality for $ l=m=1 $. Setting $ k=2s $, one can apply (\ref{I-ABA0B0}) to get
\begin{multline*}
\mathbb{E}\|A_{k}\|
\geq
\mathbb{E}\left\|\bigotimes_{i=1}^{s}A(2i-1)\otimes\bm{0}\otimes(\mathbb{E}[A^{-}(2i)]\otimes\bm{0})^{-}\right\| \\
=
\mathbb{E}\|A(1)\otimes\bm{0}\|
\otimes
\mathbb{E}\left[\bigotimes_{i=1}^{s-1}(\mathbb{E}[A^{-}(2i)]\otimes\bm{0})^{-}\otimes(A(2i+1)\otimes\bm{0})\right] 
\\
\otimes
\|(\mathbb{E}[A^{-}(k)]\otimes\bm{0})^{-}\| \\
=
\mathbb{E}\|A_{1}\otimes\bm{0}\|
\otimes
\mathbb{E}[(\mathbb{E}[A_{1}^{-}]\otimes\bm{0})^{-}\otimes A_{1}\otimes\bm{0}]^{s-1}
\otimes
\|(\mathbb{E}[A_{1}^{-}]\otimes\bm{0})^{-}\|.
\end{multline*}

The last inequality leads us to
$$
\lambda
\geq
\mathbb{E}[(\mathbb{E}[A_{1}^{-}]\otimes\bm{0})^{-}\otimes A_{1}\otimes\bm{0}]^{1/2}
=
\mathbb{E}\|(\mathbb{E}[A_{1}^{-}]\otimes\bm{0})^{-}\otimes A_{1}\|^{1/2}.
$$

In a similar manner, inequality (\ref{I-L3}) can be derived for arbitrary $ l,m\geq1 $.
\end{proof}

Examples related to evaluation of the low bound are presented in Table~\ref{T-LB3}.
\begin{table}[!ht]
\begin{center}
\begin{tabular}{||c|c|c|c|c||}
\hline\hline
\multicolumn{2}{||c|}{Low Bound} & \multicolumn{3}{|c||}{$m$} \\
\cline{3-5}
\multicolumn{2}{||c|}{(\ref{I-L3})} & $1$ & $2$ & $3$ \\
\cline{3-5} 
\hline
& $1$ & 1.5417 & 1.6188 & 1.6606 \\
\cline{2-5}
$l$ & $2$ & 1.6111 & 1.6516 & 1.6784 \\
\cline{2-5}
& $3$ & 1.6551 & 1.6787 & 1.6965 \\
\hline\hline
\end{tabular}
\caption{Examples of low bounds (\ref{I-L3}).}\label{T-LB3}
\end{center}
\end{table}

As one can see, the accuracy of bounds (\ref{I-L2}) when $ m=1,2,3 $, are quite comparable to that of bounds (\ref{I-L3}). Note, however, that in order to achieve the same accuracy, the first bound involves less computational efforts than the second.

We conclude this section with a result to be used in the error analysis below.
\begin{corollary}
For any integer $ m\geq1 $, it holds
\begin{equation}\label{I-L4}
\lambda
\geq
(\|\mathbb{E}[A_{1}^{-}]\|^{-1}\otimes\mathbb{E}\|A_{m-1}\|)^{1/m}.
\end{equation}
\end{corollary}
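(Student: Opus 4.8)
The plan is to obtain (\ref{I-L4}) as a specialization of the bound (\ref{I-L3}) followed by a scalarizing estimate of the matrix norm. First I would set $l=1$ and replace the running index $m$ in (\ref{I-L3}) by $m-1$; since $l+(m-1)=m$, this gives, for every $m\geq2$,
$$
\lambda \geq \mathbb{E}\|(\mathbb{E}[A_{1}^{-}]\otimes\bm{0})^{-}\otimes A_{m-1}\|^{1/m}.
$$
It then remains to bound the norm inside the expectation from below by $\|\mathbb{E}[A_{1}^{-}]\|^{-1}\otimes\|A_{m-1}\|$, which is the step that converts the somewhat opaque right-hand side of (\ref{I-L3}) into the explicit quantities appearing in (\ref{I-L4}).

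Writing $\bm{w}=\mathbb{E}[A_{1}^{-}]\otimes\bm{0}$, the key estimate is $\|\bm{w}^{-}\otimes B\|\geq\|\bm{w}\|^{-1}\otimes\|B\|$ for any finite matrix $B$. I would prove this purely algebraically from the properties listed in Section~2. The matrix $\bm{w}\otimes\bm{w}^{-}$ has diagonal entries $w_{i}\otimes w_{i}^{-1}=0$, hence $\bm{w}\otimes\bm{w}^{-}\geq E$; by monotonicity of $\otimes$ and of $\|\cdot\|$ this yields $\|\bm{w}\otimes\bm{w}^{-}\otimes B\|\geq\|E\otimes B\|=\|B\|$. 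Combining with the sub-multiplicativity $\|\bm{w}\otimes(\bm{w}^{-}\otimes B)\|\leq\|\bm{w}\|\otimes\|\bm{w}^{-}\otimes B\|$ and rearranging (using $\|c\otimes A\|=c\otimes\|A\|$) gives the claim. Applying it with $B=A_{m-1}$, together with the identity $\|A\otimes\bm{0}\|=\|A\|$ so that $\|\bm{w}\|=\|\mathbb{E}[A_{1}^{-}]\|$, produces
$$
\|(\mathbb{E}[A_{1}^{-}]\otimes\bm{0})^{-}\otimes A_{m-1}\| \geq \|\mathbb{E}[A_{1}^{-}]\|^{-1}\otimes\|A_{m-1}\|.
$$

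To assemble the result I would take expectations and pull the deterministic factor $\|\mathbb{E}[A_{1}^{-}]\|^{-1}$ out of $\mathbb{E}$ (legitimate since $\otimes$ is ordinary addition), obtaining $\mathbb{E}\|(\mathbb{E}[A_{1}^{-}]\otimes\bm{0})^{-}\otimes A_{m-1}\|\geq\|\mathbb{E}[A_{1}^{-}]\|^{-1}\otimes\mathbb{E}\|A_{m-1}\|$; raising both sides to the monotone power $1/m$ then yields (\ref{I-L4}) for $m\geq2$. Finally I would dispose of the boundary case $m=1$, where $A_{0}=E$ and $\mathbb{E}\|A_{0}\|=0$ reduce the claim to $\lambda\geq\|\mathbb{E}[A_{1}^{-}]\|^{-1}$: this follows from (\ref{I-L2}) at $m=1$ together with the inequality $\|\mathbb{E}(A_{1}\otimes\bm{0})^{-}\|^{-1}\geq\|\mathbb{E}[A_{1}^{-}]\|^{-1}$ already recorded in the second estimate of (\ref{I-EABEAEB0}).

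I expect the main obstacle to be the norm estimate $\|\bm{w}^{-}\otimes B\|\geq\|\bm{w}\|^{-1}\otimes\|B\|$: one must assemble the right combination of monotonicity, sub-multiplicativity, and the relation $\bm{w}\otimes\bm{w}^{-}\geq E$. In particular, the already-stated lower bound $\|A\otimes B\|\geq\|A\|\otimes\|B^{-}\|^{-1}$ is a false friend here, since applied to $A=\bm{w}^{-}$ and $B=A_{m-1}$ it would introduce the wrong quantities $\|\bm{w}^{-}\|$ (a minimum rather than $\|\bm{w}\|^{-1}$) and $\|A_{m-1}^{-}\|^{-1}$ in place of $\|A_{m-1}\|$; recognizing that a different argument is needed is the crux of the proof.
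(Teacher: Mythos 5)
Your proof is correct and follows essentially the same route as the paper: for $m>1$ both arguments specialize (\ref{I-L3}) to $l=1$ with the index shifted to $m-1$ and then pass to $\|\mathbb{E}[A_{1}^{-}]\|^{-1}\otimes\mathbb{E}\|A_{m-1}\|$, and your derivation of the key norm estimate $\|\bm{w}^{-}\otimes B\|\geq\|\bm{w}\|^{-1}\otimes\|B\|$ merely supplies a justification for the final inequality that the paper asserts without proof. The only divergence is the base case $m=1$, which the paper handles via the chain $\mathbb{E}\|A_{k}\|\geq\|(\mathbb{E}[A_{1}])^{k}\|\geq\|\mathbb{E}[A_{1}^{-}]\|^{-k}$ while you obtain $\lambda\geq\|\mathbb{E}[A_{1}^{-}]\|^{-1}$ from (\ref{I-L2}) together with the second estimate in (\ref{I-EABEAEB0}); both routes are valid.
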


\begin{proof} In order to verify inequality (\ref{I-L4}) for $ m=1 $, first note that
$$
\mathbb{E}\|A_{k}\|
\geq
\|(\mathbb{E}[A_{1}])^{k}\|
\geq
\|\mathbb{E}[A_{1}^{-}]\|^{-k},
$$
and therefore, $ \lambda\geq\|\mathbb{E}[A_{1}^{-}]\|^{-1}=\|\mathbb{E}[A_{1}^{-}]\|^{-1}\otimes\mathbb{E}\|A_{0}\| $, where $ A_{0}=E $.

For any $ m>1 $, the inequality results from (\ref{I-L3}):
$$
\lambda
\geq
\mathbb{E}\|(\mathbb{E}[A_{1}^{-}]\otimes\bm{0})^{-}\otimes A_{m-1}\|^{1/m}
\geq
(\|\mathbb{E}[A_{1}^{-}]\|^{-1}\otimes\mathbb{E}\|A_{m-1}\|)^{1/m}.
\qedhere
$$
\end{proof}

\section{An Error Analysis}

Consider the absolute error
$$
e_{m}=\frac{1}{m}\mathbb{E}\|A_{m}\|-\lambda,
$$
which represents the difference between $ \lambda $ and its approximation $ \mathbb{E}\|A_{m}\|^{1/m} $.

Assuming the entries of $ A_{1} $ to be finite w.p.~1, we have the following result.
\begin{lemma}
For each $ m\geq1 $, it holds
$$
e_{m}
\leq
\frac{1}{m}C,
$$
where $ C=\mathbb{E}\|A_{1}\|+\|\mathbb{E}[A_{1}^{-}]\|=\mathbb{E}\|A_{1}\|\otimes\|\mathbb{E}[A_{1}^{-}]\| $.
\end{lemma}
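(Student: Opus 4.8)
The plan is to sandwich $\lambda$ between its standard overestimate $\tfrac{1}{m}\mathbb{E}\|A_m\|$ — whose nonnegative excess over $\lambda$ is by definition $e_m$, the overestimate being legitimate by the upper bound in (\ref{I-LU1}) — and the lower estimate supplied by Corollary (\ref{I-L4}). Written out in conventional arithmetic, where $\otimes$ is ordinary addition, the idempotent inverse $\|\cdot\|^{-1}$ is negation, and raising to the power $1/m$ is multiplication by $1/m$, inequality (\ref{I-L4}) reads $\lambda \geq \tfrac{1}{m}\bigl(\mathbb{E}\|A_{m-1}\| - \|\mathbb{E}[A_1^{-}]\|\bigr)$; this is applicable here since the entries of $A_1$ are assumed finite w.p.~1. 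This lower bound is the engine of the whole argument.

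First I would substitute this bound into the definition $e_m = \tfrac{1}{m}\mathbb{E}\|A_m\| - \lambda$. Subtracting yields $e_m \leq \tfrac{1}{m}\bigl(\mathbb{E}\|A_m\| - \mathbb{E}\|A_{m-1}\| + \|\mathbb{E}[A_1^{-}]\|\bigr)$, so the task collapses to controlling the single increment $\mathbb{E}\|A_m\| - \mathbb{E}\|A_{m-1}\|$. This is the crux, and everything else is bookkeeping around it.

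Next, for the key step I would factor $A_m = A_{m-1}\otimes A(m)$ and invoke submultiplicativity of the norm, $\|A_m\| \leq \|A_{m-1}\|\otimes\|A(m)\|$, that is $\|A_m\| \leq \|A_{m-1}\| + \|A(m)\|$ in ordinary terms. Taking expectations and using that the $A(k)$ are i.i.d. (so $\mathbb{E}\|A(m)\| = \mathbb{E}\|A(1)\| = \mathbb{E}\|A_1\|$) gives $\mathbb{E}\|A_m\| - \mathbb{E}\|A_{m-1}\| \leq \mathbb{E}\|A_1\|$. Plugging this in yields $e_m \leq \tfrac{1}{m}\bigl(\mathbb{E}\|A_1\| + \|\mathbb{E}[A_1^{-}]\|\bigr) = \tfrac{1}{m}C$, as claimed; the base case $m=1$ is absorbed automatically under the convention $A_0 = E$, $\mathbb{E}\|A_0\| = 0$ used in the proof of (\ref{I-L4}).

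The individual computations are routine once the lower bound is in place; the one point demanding care — and what I would regard as the main obstacle — is the faithful translation of the idempotent expression in (\ref{I-L4}) into conventional arithmetic, ensuring that the $1/m$ factor, the negation of $\|\mathbb{E}[A_1^{-}]\|$, and the index shift to $A_{m-1}$ all emerge with the correct signs, since a single misread there would corrupt the constant $C$.
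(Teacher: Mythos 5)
Your proposal is correct and follows essentially the same route as the paper: both combine the lower bound (\ref{I-L4}) with the submultiplicativity estimate $\mathbb{E}\|A_{m}\|\leq\mathbb{E}\|A_{1}\|+\mathbb{E}\|A_{m-1}\|$ (you factor off $A(m)$ rather than $A(1)$, which by the i.i.d.\ assumption makes no difference), and your translation of the idempotent notation, including $\|\mathbb{E}[A_{1}^{-}]\|^{-1}=-\|\mathbb{E}[A_{1}^{-}]\|$ and the $m=1$ convention $A_{0}=E$, is faithful.
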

\begin{proof}
By applying the obvious inequality: $ \mathbb{E}\|A_{m}\|\leq\mathbb{E}\|A_{1}\|\otimes\mathbb{E}\|A_{m-1}\| $, combined with (\ref{I-L4}), we have
\begin{multline*}
e_{m}
\leq
\frac{1}{m}(\mathbb{E}\|A_{1}\|+\mathbb{E}\|A_{m-1}\|)
-
\frac{1}{m}(\|\mathbb{E}[A_{1}^{-}]\|^{-1}+\mathbb{E}\|A_{m-1}\|) \\
=
\frac{1}{m}(\mathbb{E}\|A_{1}\|+\|\mathbb{E}[A_{1}^{-}]\|)
=
\frac{1}{m}(\mathbb{E}\|A_{1}\|\otimes\|\mathbb{E}[A_{1}^{-}]\|).
\qedhere
\end{multline*}
\end{proof}

Let us compute the constant $ C $ for the test problem. Since $ \mathbb{E}\|A_{1}\|=25/12 $, $ \|\mathbb{E}[A_{1}^{-}]\|=-1 $, we have $ C=13/12\approx1.0833 $. 

As it is easy to see, in the test example, the above error bound considerably overestimates the actual error at least for $ m=1,2,3 $.

\bibliographystyle{utphys}

\bibliography{Bounds_on_the_state_vector_growth_rate_in_stochastic_dynamical_systems}

\end{document}